\newtheorem{theorem}{Theorem}[section]
\newtheorem{lemma}[theorem]{Lemma}
\newtheorem{proposition}[theorem]{Proposition}
\theoremstyle{definition}
\newtheorem{definition}[theorem]{Definition}
\theoremstyle{remark}
\newtheorem{remark}[theorem]{Remark}
\numberwithin{equation}{section}
\begin{document}

\title[the Bernstein inequality for slice regular polynomials]{the Bernstein  inequality for slice regular polynomials}
\thanks{This work was supported by the National Natural Science Foundation of China (No. 11801125) and the Fundamental
Research Funds for the Central Universities (Nos. JZ2019HGTB0054 and JZ2018HGBZ0118).}
\author[Z. H. Xu]{Zhenghua Xu}
\address{Zhenghua Xu, School of Mathematics, HeFei University of Technology, Hefei 230601, China}
\email{zhxu$\symbol{64}$hfut.edu.cn}
\keywords{Bernstein  inequality; Turan inequality; Quaternion.}
\subjclass[2010]{30C10, 41A17}
\begin{abstract}
Due to the invalidation  of the Gauss-Lucas type result for quaternionic polynomials,    we first give in this paper an alternative proof  of  the  Bernstein  inequality in $L^{p} (1\leq p \leq+\infty)$  for slice regular polynomials by   the Fej\'er  kernel and the Minkowski inequality.   Secondly,  we extend a result of  Ankeny-Rivlin  to the quaternionic setting via the Hopf lemma.  By the way, some Turan inequalities  are  established for  slice regular polynomials.
\end{abstract}
\maketitle

\section{Introduction}
Let $\mathbb{P}_{n}(\mathbb{C})$ be the class of complex polynomials $P(z)=\sum^{n}_{j=0} a_{j}z^{j}$,   where $  a_{j}\in \mathbb{C}, j=0,1,\ldots,n.$ For $P \in \mathbb{P}_{n}(\mathbb{C})$, define
$$ \|P\|_{p}:=  \bigg( \frac{1}{2\pi}\int_{0}^{2\pi}|P( e^{i\theta})|^{p}d\theta\bigg)^{\frac{1}{p}}, \quad 0<p<+\infty,$$
and
$$ \|P\|_{\infty}:= \max_{|z|=1} |P(z)| = \max_{|z|\leq 1} |P(z)|. $$
For any $P\in \mathbb{P}_{n}(\mathbb{C})$, it holds that
\begin{eqnarray} \label{main-result-c-p}
\|P'\|_{p}\leq n\|P\|_{p}, \quad 0<p \leq +\infty.
\end{eqnarray}

The case $p =+\infty$ of (\ref{main-result-c-p}) is known as  the  Bernstein  inequality  which  is one  of the most powerful   tools  with many important applications in approximation theory. It was proved by  Zygmund \cite{Zygmund} with the aid of  an interpolation of Riesz for  $1\leq p <+\infty$ and     by Arestov \cite{Arestov} by using the Jensen formula and the subharmonic function  for  $0 < p <1$.
For more extensions of the Bernstein  inequality for  complex polynomials, we refer to  \cite[Chapter 1]{Borwein}, \cite[Chapter 4]{Lorentz}, \cite[Chapter 2]{Marden}, \cite[Chapter 14]{Rahman} and \cite{Govil}.

Recently, the classical Bernstein  inequality has been extended to the quaternionic setting depending  heavily on the Gauss-Lucas type theorem and the structure  of zero sets for quaternionic  slice regular  polynomials   \cite{SS}.  However, the method  used in \cite{SS} is not valid  for the octonionic and Clifford algebraic  cases.  What is worse, the alleged Gauss-Lucas type theorem  given in \cite{Vlacci} for  quaternionic   polynomials  has not been  verified since there exits a deadly  mistake in \cite[Proposition 3.14]{Vlacci}.   Very recently, Ghiloni and Perotti have proved that the Gauss-Lucas type result  in \cite{Vlacci}  is  correct only for   quaternionic  polynomials of   degree $d=2$ \cite{GP}.   Hence, it is necessary to  prove the Bernstein  inequality for  quaternions in  other   ways.

In this paper, we shall first  give an alternative proof of the Bernstein  inequality for  quaternions in a more general version. Precisely speaking, we   establish the analogy of  (\ref{main-result-c-p}) for $1 \leq p \leq +\infty$   in the setting of  quaternions by applying the Fej\'er kernel  and the Minkowski inequality, which is also   applicable to both octonions   and Clifford algebras.    In fact, our method can be used to establish the case $1 \leq p \leq +\infty$  of (\ref{main-result-c-p}) for complex  polynomials with coefficients valued in real or  complex normed linear  spaces.

To state our results, let us recall   some preliminary definitions and notation   for  quaternions.

Denote by $\mathbb H$ the non-commutative, associative, real algebra of quaternions with standard basis $\{1,\,i,\,j, \,k\}$,  subject to the multiplication rules
$$i^2=j^2=k^2=ijk=-1.$$
 Every element $q=x_0+x_1i+x_2j+x_3k$ in $\mathbb H$ is composed by the \textit{real} part ${\rm{Re}}\, (q)=x_0$ and the \textit{imaginary} part ${\rm{Im}}\, (q)=x_1i+x_2j+x_3k$. The \textit{conjugate} of $q\in \mathbb H$ is then $\bar{q}={\rm{Re}}\, (q)-{\rm{Im}}\, (q)$ and its \textit{modulus} is defined by $|q|=\sqrt{q\overline{q}}= \sqrt{x_0^{2}+x_1^{2}+x_2^{2}+x_3^{2}}$. The  inverse of each nonzero element $q$ of $\mathbb{H}$ is given by $ q^{-1} =|q|^{-2}\overline{q}$.
 Every $q \in \mathbb H $ can be expressed as $q = x + yI$, where $x, y \in \mathbb R$ and $I= {\rm{Im}}\, (q) /|{\rm{Im}}\, (q)|$
 if ${\rm{Im}}\, q\neq 0$, otherwise we take $I$ arbitrarily such that $I^2=-1$.
Here $I$ is an element of the unit 2-sphere of purely imaginary quaternions,
$$\mathbb S=\big\{q \in \mathbb H:q^2 =-1\big\}.$$
For every $I \in \mathbb S $,  we  denote by $\mathbb C_I$ the plane $ \mathbb R \oplus I\mathbb R $, isomorphic to $ \mathbb C$, and  by $\mathbb{B}_I$ the intersection $ \mathbb{B}\cap \mathbb C_I $, where $\mathbb{B}=\{q\in \mathbb{H}: |q|<1\}$ denotes  the open unit ball of quaternions.

Throughout this paper, we consider the quaternionic   polynomials in the form of
 $$P(q)=\sum^{n}_{j=0} q^{j}a_{j}, \quad a_{j}\in \mathbb{H}, j=0,1,\ldots,n.$$
In fact, those polynomials are  (left) slice regular functions in the sense of  Gentili and  Struppa  \cite{GS2}.
For the slice regular polynomial $P(q)=\sum^{n}_{j=0} q^{j}a_{j}$,   its derivative  is defined as
$$P'(q)=\sum^{n}_{j=1} q^{j-1}ja_{j}.$$

Let $\mathbb{P}_{n}(\mathbb{H})$ be the class of slice regular    polynomials $P(q)=\sum^{n}_{j=0}q^{j}a_{j},    a_{j}\in \mathbb{H}, j=0,1,\ldots,n.$ Let $P_I$ be the   restriction of $P$ to $\mathbb{C}_I$. For $P \in \mathbb{P}_{n}(\mathbb{H})$,  define
$$ \|P\|_{p}:= \sup_{I\in \mathbb{S}} \bigg( \frac{1}{2\pi}\int_{0}^{2\pi}|P_I( e^{I\theta})|^{p}d\theta\bigg)^{\frac{1}{p}}, \quad 0<p<+\infty,$$
and
$$ \|P\|_{\infty}:= \max_{|q|=1} |P(q)|. $$

 With the above notation, we can now present the first main result  as follows.
\begin{theorem}\label{Bernstein}
For  $P \in \mathbb{P}_{n}(\mathbb{H})$ and $1 \leq p \leq +\infty$, we have
\begin{eqnarray} \label{main-result}
\|P'\|_{p}\leq n\|P\|_{p}.
\end{eqnarray}
The inequality is sharp. It becomes an equality for  $P(q)=q^{n}a_{n}$ with $a_{n}\in \mathbb{H}$.
\end{theorem}

\begin{remark}
For complex polynomials, the Bernstein  inequality in the statement of Theorem \ref{Bernstein}
is obtained, for example, by Corollary 14.6.4, p. 553 in the book \cite{Rahman}, for all $0<p\leq +\infty$.
The Bernstein inequality for quaternionic polynomials in the case when $0 < p < 1$ remains as an open question.
\end{remark}


The refinement of the Bernstein inequality was conjectured by Erd\"{o}s and proved by Lax \cite{Lax}  stating that
 \begin{eqnarray}\label{Lax}
\| P'\|_{\infty}   \leq \frac{n}{2}  \|P\|_{\infty}
 \end{eqnarray}
for those complex polynomials $P\in \mathbb{P}_{n}(\mathbb{C})$ which   have  no zero   in the open unit disk $\mathbb{D}=\{z\in \mathbb{C}: |z|<1\}$. See
\cite{Aziz} and references therein for other proofs of this inequality.

As pointed out in \cite[Theorem 3.1]{SS},  the Erd\"{o}s-Lax   inequality (\ref{Lax})   fails in the quaternionic setting in general and   holds true  for a subclass of the quaternionic polynomials   as follows.

\begin{proposition}\label{C}
Assume   that    $P\in \mathbb{P}_{n}(\mathbb{H})$   has no zero  in the ball $\mathbb{B}$ and  the zeros of $P$ are either spheres and/or real points and  in addition to possibly up to one isolated zero  $ \alpha \in \mathbb{H}\backslash \mathbb{R}$ that has multiplicity $1$.
Then
$$\|P'\|_{\infty}\leq \frac{n}{2}\|P\|_{\infty}.$$
 \end{proposition}

Using the convex  combination identity for slice regular functions \cite{Ren}, we shall  offer  a brief method to establish the following version which contains the result in Proposition \ref{C}.  Note that  the proof of Proposition \ref{Erdos} is also valid for the octonion   or Clifford algebra valued polynomials with all   coefficients  in a  complex plane. In fact, we can find that the hypothesis  in Proposition \ref{Erdos} that the slice regular polynomial preserves one slice can not be omitted in general as shown by the example $P(q)=q^{2}-q(i+j)+k$.

\begin{proposition}\label{Erdos}
Let  $P\in \mathbb{P}_{n}(\mathbb{H})$.  If  $P_{I}$ has no zero  in the open unit disk $\mathbb{B}_{I} \subset \mathbb{C}_{I}$ and  $P_{I}(\mathbb{C}_{I})\subset \mathbb{C}_{I}$ for some $I\in \mathbb{S}$, then
$$\|P'\|_{\infty}\leq \frac{n}{2}\|P\|_{\infty}.$$
\end{proposition}

For the slice regular polynomial $P(q)=\sum^{n}_{j=0} q^{j}a_{j}$,  denote  $\widetilde{P}(q)=\sum^{n}_{j=0} q^{j}a_{n-j}= q^{n} P(\frac{1}{q})$. It follows that
$$\max_{|q|= R}|P(q)| = R^{n}  \max_{|q|= R} | q^{-n}P(q)| = R^{n}  \max_{|q|= R} \big|\widetilde{P}(\frac{1}{q})\big|=
R^{n}  \max_{|q|= 1/R} |\widetilde{P}(q)|,  $$
As in the complex case, we have,  by    the  maximum modulus principle   for slice  regular functions \cite[Theorem 7.1]{GSS},
$$ \max_{|q|= 1/R} |\widetilde{P}(q)|   \leq    \max_{|q|= 1} |\widetilde{P}(q)|=   \max_{|q|= 1} | P (\frac{1}{q})| = \max_{|q|= 1} |P(q)|, \quad  R>1.$$
Hence,
\begin{eqnarray} \label{main-result-equavalent-1}
 \max_{|q|= R}|P(q)|  \leq R^{n}    \max_{|q|= 1} |P(q)|,  \quad  R>1.   \end{eqnarray}

We also establish the following result for  $0<R<1$.
\begin{theorem}\label{main-result-equavalent-11}
For  $P\in \mathbb{P}_{n}(\mathbb{H})$, we have
\begin{eqnarray} \label{main-result-equavalent-11-}
 \max_{|q|= R}|P(q)|  \geq R^{n}    \max_{|q|= 1} |P(q)|,  \quad  0<R<1.   \end{eqnarray}
\end{theorem}

As an application of the Erd\"{o}s-Lax   inequality, Ankeny and  Rivlin strengthened  the complex version of  (\ref{main-result-equavalent-1}) as   follows   \cite{Ankeny}.
\begin{theorem}\label{Ankeny-Theorem}
Let   $P\in \mathbb{P}_{n}(\mathbb{C})$  be  such that $ \|P\|_{\infty}= 1$. If $P$ has no zero in $\mathbb{D}$, then
$$ \max_{|z|=R} |P(z)|\leq \frac{1+R^{n}}{2}, \quad R>1,$$
with equality only for $P(z)= (\lambda + \mu z^{n}) /2$, where $ \lambda, \mu \in \partial \mathbb{D}$.
\end{theorem}

As pointed out in \cite{Ankeny}, the converse of Theorem \ref{Ankeny-Theorem} is false as  shown by the  example $P(z)=(z+ \frac{1}{2})(z+3)$. However, the following result for complex polynomials in the converse direction is valid. In fact, we can prove it in the quaternionic setting via the Hopf  lemma, instead of the  Gauss-Lucas   theorem.
\begin{theorem}\label{Ankeny-Converse}
Let $P\in \mathbb{P}_{n}(\mathbb{H})$ be  such that
$P(1)=\|P\|_{\infty}=1$, and
$$ \max_{|q|=R}|P(q)|\leq \frac{1+R^{n}}{2}, \quad 1<R<\delta+1,$$
where $\delta$ is any positive number.
Then $P$ does not have all its zeros within the open unit ball $\mathbb{B}$.
\end{theorem}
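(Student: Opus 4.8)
The plan is to argue by contradiction: assuming that every zero of $P$ lies in the open ball $\mathbb{B}$, I will produce two incompatible bounds, $\widetilde{Q}'(1)>n$ and $\widetilde{Q}'(1)\le n$, for a suitable auxiliary polynomial $\widetilde{Q}$. First I would pass to the symmetrized polynomial $P^{s}=P\ast P^{c}$, where $P^{c}(q)=\sum_{j}q^{j}\bar a_{j}$ and $\ast$ is the slice ($*$-)product. By standard facts of the $*$-calculus (see \cite{GSS}), $P^{s}$ is slice regular with \emph{real} coefficients, of degree $2n$ with leading coefficient $|a_{n}|^{2}\neq 0$; on the real axis $P^{s}(x)=P(x)\overline{P(x)}=|P(x)|^{2}$ (there the $*$-product is the ordinary product and $P^{c}(x)=\overline{P(x)}$); and $P^{s}(q)=0$ exactly when the $2$-sphere through $q$ (or $q$ itself, when $q\in\mathbb{R}$) carries a zero of $P$. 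Restricting $P^{s}$ to one slice $\mathbb{C}_{I}$ gives an honest complex polynomial $\widetilde{Q}$ of degree $2n$ with real coefficients and $\widetilde{Q}(1)=|P(1)|^{2}=1$; and the contradiction hypothesis forces \emph{all} $2n$ roots of $\widetilde{Q}$ into $\{|z|<1\}$, since a root $w$ of $\widetilde{Q}$ has $|w|=|\zeta|<1$ for some zero $\zeta$ of $P$ on the sphere of $w$.

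For the lower bound $\widetilde{Q}'(1)>n$ I would use Hopf's boundary point lemma in place of the Gauss--Lucas theorem. Form the reciprocal polynomial $\widetilde{Q}^{*}(z)=z^{2n}\widetilde{Q}(1/z)$, whose roots are the reciprocals $1/w$ of the nonzero roots $w$ of $\widetilde{Q}$ and hence all lie in $\{|z|>1\}$. Since $\widetilde{Q}$ has no zero on $\{|z|\ge 1\}$, the function $h=\widetilde{Q}^{*}/\widetilde{Q}$ is holomorphic on $\{|z|>1\}\cup\{\infty\}$, has $|h|\equiv 1$ on $\{|z|=1\}$ (as $|\widetilde{Q}^{*}(z)|=|\widetilde{Q}(z)|$ there, by the reality of the coefficients) and $\lim_{z\to\infty}|h(z)|<1$; by the maximum modulus principle, and since $h$ is nonconstant (the two polynomials have disjoint root sets and $\deg\widetilde{Q}\ge 1$), it follows that $|h|<1$ throughout $\{|z|>1\}$. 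Thus $\log|h|$ is subharmonic on $\{|z|>1\}$, continuous and equal to its maximal value $0$ at the boundary point $z=1$, and strictly negative inside, so Hopf's lemma gives $\left.\tfrac{d}{dr}\log|h(r)|\right|_{r=1}<0$. On the other hand, using $\widetilde{Q}(1)=\widetilde{Q}^{*}(1)=1$ and $\widetilde{Q}^{*\prime}(1)=2n\,\widetilde{Q}(1)-\widetilde{Q}'(1)=2n-\widetilde{Q}'(1)$ (obtained by differentiating the defining identity), this derivative equals $\widetilde{Q}^{*\prime}(1)-\widetilde{Q}'(1)=2n-2\widetilde{Q}'(1)$, whence $\widetilde{Q}'(1)>n$.

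For the upper bound I would only specialize the growth hypothesis to real points: for $1<r<\delta$ it gives $|P(r)|\le\tfrac{1+r^{n}}{2}$, hence $\widetilde{Q}(r)=|P(r)|^{2}\le\bigl(\tfrac{1+r^{n}}{2}\bigr)^{2}$ on $[1,\delta)$, with equality at $r=1$; therefore $r\mapsto\bigl(\tfrac{1+r^{n}}{2}\bigr)^{2}-\widetilde{Q}(r)$ is nonnegative on $[1,\delta)$ and vanishes at $r=1$, so its right-hand derivative at $1$ is nonnegative, and since $\left.\tfrac{d}{dr}\bigl(\tfrac{1+r^{n}}{2}\bigr)^{2}\right|_{r=1}=n$ we get $\widetilde{Q}'(1)\le n$ — contradicting the previous step. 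I expect the main obstacle to be the slice-regular bookkeeping in the first paragraph: one must carefully verify that $P^{s}$ has real coefficients and degree exactly $2n$ (this is where the hypothesis $\deg P=n$ enters), that it restricts to a genuine complex polynomial on $\mathbb{C}_{I}$, that it realizes $|P(x)|^{2}$ on the reals, and that its zero set consists of the spheres and real points carrying the zeros of $P$. Once this reduction is secured the analytic part is short; Hopf's lemma is exactly what provides the \emph{strict} inequality $\widetilde{Q}'(1)>n$ (a cruder argument gives only $\widetilde{Q}'(1)\ge n$, with equality for the Ankeny--Rivlin extremal $P(q)=\tfrac{q^{n}+1}{2}$, whose zeros all lie on $\partial\mathbb{B}$), and the degenerate polynomial $P(q)=\gamma q^{n}$ needs no separate treatment since it already violates the growth bound for every $r>1$.
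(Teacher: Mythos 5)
Your argument is correct, and it reaches the two incompatible bounds $(P^{s})'(1)>n$ and $(P^{s})'(1)\le n$ exactly as the paper does, via the same reduction to the symmetrization $P^{s}=P\ast P^{c}$ (real coefficients, degree $2n$, all roots forced into $\mathbb{D}$ on each slice by Proposition \ref{zero}). The two proofs diverge in how the strict lower bound is obtained. The paper proves the elementary Lemma \ref{Ankeny}: writing $p(z)=\prod(z-z_m)$ with $|z_m|<1$, the estimate $\mathrm{Re}\,\frac{e^{i\theta}}{e^{i\theta}-z_m}>\frac12$ summed over the $2n$ roots gives $(P^{s})'(1)>n$ directly; the Hopf lemma appears in the paper only in the minor role of guaranteeing $(P^{s})'(1)>0$ so that the modulus can be dropped in the upper-bound step. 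You instead make Hopf's boundary point lemma carry the whole lower bound, applying it to $\log\bigl|\widetilde{Q}^{*}/\widetilde{Q}\bigr|$ on the exterior of the disk and reading off $2n-2\widetilde{Q}'(1)<0$ from the normal derivative at $z=1$; this is valid (the quotient is indeed an exterior Blaschke-type product, using the reality of the coefficients to identify $|\widetilde{Q}^{*}|$ with $|\widetilde{Q}|$ on the circle), though if one unwinds $\mathrm{Re}\,\frac{h'(1)}{h(1)}=-\sum_m\mathrm{Re}\,\frac{1+w_m}{1-w_m}$ one lands back on essentially the paper's elementary inequality, so the Hopf machinery is heavier than strictly needed. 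Your upper-bound step is actually a small improvement in economy: by restricting to the real ray and using $P^{s}(r)=|P(r)|^{2}$ you avoid the paper's appeal to Proposition \ref{Bloch} to control $\max_{|q|=R}|P^{s}(q)|$, and you sidestep the absolute-value issue entirely since $|P(r)|^{2}$ is manifestly real and the comparison function $\bigl(\frac{1+r^{n}}{2}\bigr)^{2}$ touches it at $r=1$. The only points to make fully explicit in a final write-up are the ones you already flag: that $\deg P^{s}=2n$ (leading coefficient $|a_n|^{2}$) so that $\widetilde{Q}^{*}$ behaves as claimed, and that the sphere carrying a zero of $P$ has the same radius as the corresponding root of $\widetilde{Q}$.
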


The remaining part of this paper is organized as follows. In Section 2, we  shall prove Theorem \ref{Bernstein} and then give some interesting  remarks.
For completeness, the proof of Proposition \ref{Erdos} is also given in Section 2. Finally, we establish Theorems    \ref{main-result-equavalent-11} and \ref{Ankeny-Converse} in Section 3.
It is worth mentioning that   Proposition \ref{Bloch} is  vital to prove  Theorems  \ref{main-result-equavalent-11} and  \ref{Ankeny-Converse} due to the non-commutativity of quaternions.

\section{Proof of   Theorem \ref{Bernstein}  and  Proposition \ref{Erdos}  }
To prove Theorem \ref{Bernstein}, we resort to the Fej\'{e}r kernel.
The Fej\'{e}r kernel is given by
$$F_{n}(x) = \frac{1}{n+1} \sum_{k=0}^{n}D_k(x),$$
where
$$D_k(x)=\sum_{s=-k}^k {\rm e}^{isx}$$
is the $k$-th order Dirichlet kernel.

It can also be written in a closed form as
$$F_n(x) = \frac{1}{n+1} \left(\frac{\sin \frac{(n+1) x}{2}}{\sin \frac{x}{2}}\right)^2 =
\frac{1}{n+1} \frac{1 - \cos(n+1)x}{1 - \cos x}.$$
The important feature of the Fej\'{e}r kernel is   the fact that it is non-negative.
The Fej\'{e}r kernel can also be expressed as
\begin{eqnarray} \label{eq:Fejer-kern-series}F_n(x)
=\sum_{j=-n}^{n}\left(1-\frac{|j|}{n+1}\right)e^{ijx}.\end{eqnarray}

Let $g:\mathbb{R}\rightarrow \mathbb{H}$ be continuous and $2\pi$-periodic. Consider its Fourier series
$$g(\theta)=\sum_{j=-\infty}^{+\infty}e^{ij\theta}c_{j}, \qquad c_{j}=\frac{1}{2\pi}\int_{0}^{2\pi}e^{-ij\theta}g(\theta)d\theta.$$
The $n$-th partial sum of the Fourier series is given by
$$s_{n}(\theta;g)=\sum_{j=-n}^{n}e^{ij\theta}c_{j}
=\frac{1}{2\pi}\int_{0}^{2\pi}D_{n}(\theta-\varphi)g(\varphi)d\varphi$$
and  the corresponding $n$-th Ces\`{a}ro sum has the expression
 \begin{eqnarray}\label{Cesaro}
\sigma_{n}(\theta;g)=\frac{1}{n+1}\sum_{j=0}^{n}s_{j}(\theta;g)
=\frac{1}{2\pi}\int_{0}^{2\pi}F_{n}(\theta-\varphi)g(\varphi)d\varphi.
 \end{eqnarray}
 From (\ref{eq:Fejer-kern-series}), we thus have
 \begin{eqnarray}\label{Cesaro-90} \sigma_{n}(\theta;g)=\sum^{n}_{j=-n}(1-\frac{|j|}{n+1})e^{ij\theta}c_{j}. \end{eqnarray}
 for any $g(\theta)=\sum_{j=-\infty}^{+\infty}e^{ij\theta}c_{j}$.

With above preliminaries, we come to give the proof of Theorem \ref{Bernstein}.

\begin{proof}[Proof of Theorem \ref{Bernstein}]  Starting from the  slice regular polynomial $P(q)=\sum^{n}_{j=0} q^{j}a_{j}$, for any fixed $I\in \mathbb{S}$, we introduce the  continuous and $2\pi$-periodic function $g:\mathbb{R}\rightarrow {\mathbb H}$ via
$$g(\theta)=e^{In\theta}P(e^{-I\theta}).$$
It is evident that
\begin{eqnarray}  \label{maxg}
 |g(\theta)|=|e^{In\theta}P(e^{-I\theta})|= |P(e^{-I\theta})|,
\end{eqnarray}
 and
 \begin{eqnarray}\label{associa}
 g(\theta)=\sum^{n}_{j=0} e^{Ij\theta}a_{n-j}.
 \end{eqnarray}

Notice that
\begin{eqnarray}\label{deriative}
\left.\frac{1}{n}q^{-(n-1)}P'(q)\right|_{q=e^{I\theta}}
=\sum^{n}_{j=0} \frac{j}{n}e^{(j-n)I\theta}a_{j}
=\sum^{n-1}_{j=0}(1-\frac{j}{n})e^{-Ij\theta}a_{n-j}.\end{eqnarray}
The right side is equal to $\sigma_{n-1}(-\theta;g)$ due to (\ref{Cesaro-90}).

From   equality (\ref{Cesaro}), it follows that
\begin{eqnarray}\label{der}
 \sigma_{n-1}(-\theta;g)
=\frac{1}{2\pi}\int_{0}^{2\pi}F_{n-1}(-\theta-\varphi)g(\varphi)d\varphi. \end{eqnarray}
Combing this with (\ref{deriative}), we have
\begin{eqnarray*}\label{condition1}
 \left.\frac{1}{n}|q^{-(n-1)}P'(q)|\right|_{q=e^{I\theta}}
 &\leq & \frac{1}{2\pi}\int_{0}^{2\pi}F_{n-1}(-\theta-\varphi)|g(\varphi)|d\varphi
 \\
 &=& \frac{1}{2\pi}\int_{0}^{2\pi}F_{n-1}( \theta+\varphi)|g(\varphi)|d\varphi
 \\
  &= & \frac{1}{2\pi}\int_{0}^{2\pi}F_{n-1}( \varphi)|g(\varphi-\theta)|d\varphi,
 \end{eqnarray*}
which implies that, by the Minkowski inequality for $1\leq p<+\infty$ and (\ref{maxg}),
\begin{eqnarray*}\label{condition23}
 \bigg(\int_{0}^{2\pi}\Big|\frac{P'(e^{I\theta})}{n}\Big|^{p} \frac{d\theta }{2\pi}\bigg)^{\frac{1}{p}}
 &\leq& \int_{0}^{2\pi}  \frac{d\varphi}{2\pi}\bigg(\int_{0}^{2\pi}F_{n-1}^{p}( \varphi)|g(\varphi-\theta)|^{p}\frac{d\theta }{2\pi}\bigg)^{\frac{1}{p}}
 \\
 &=& \int_{0}^{2\pi} F_{n-1}( \varphi) \frac{d\varphi}{2\pi}\bigg(\int_{0}^{2\pi}|g(-\theta)|^{p}\frac{d\theta }{2\pi}\bigg)^{\frac{1}{p}}
\\
  &=& \bigg(\int_{0}^{2\pi}|P(e^{I\theta})|^{p}\frac{d\theta }{2\pi}\bigg)^{\frac{1}{p}}  .
     \end{eqnarray*}
Hence,
$$\|P'\|_{p}\leq n\|P\|_{p}, \quad 1\leq p<+\infty. $$

From  (\ref{maxg}), (\ref{deriative}) and (\ref{der}), the case of  $p=+\infty$ can be easily obtained. The proof is complete.
\end{proof}

It is worth mentioning that the proof of Theorem \ref{Bernstein} in the case $p =+ \infty$ follows the method  of \cite[Theorem 14.1.1]{Rahman}.

Two useful remarks concerning Theorem \ref{Bernstein} are in order.

\begin{remark}\label{O-C}
Denote by $\mathbb{R}_{0,m}$ the real Clifford algebra  over $m$ imaginary units  $e_1,e_2,\ldots,e_m$ which satisfy  $e_ie_j+e_je_i=-2\delta_{ij}.$ An element $a$ in  $\mathbb{R}_{0,m}$  is denoted by  $a=\sum_{A}a_{A}e_{A},$ where  $a_A \in \mathbb{R},  A = h_{1} \ldots h_{r}, 1\leq h_{1}<\ldots<h_{r}\leq m, e_{A}=e_{h_{1}}\ldots e_{h_{r}}$ and $e_{\emptyset}=1$. The modulus of  $a$ is defined by $|a|= ({\sum_{A}|a_{A}|^{2}} )^{\frac{1}{2}}.$  Note that the equality $|ab|=|a| |b|$ does not  hold generally for   $a, b \in \mathbb{R}_{0,m}$ when  $m\geq 3$. However, (\ref{maxg}) still holds for Clifford algebras  due to the following.

\begin{lemma}$($\cite[Theorem 3.14]{Gurlebeck}$)$\label{modulus}
For   $a, b\in \mathbb{R}_{0,m}$ with  $b\overline{b}=|b|^{2}$,  we have $$|ab|=|a| |b|.$$
In particular,
$$|xb|=|x||b|$$
for  any paravector $x = x_{0}+ x_{1}e_{1} + \ldots + x_{m}e_{m} \in \mathbb R^{m+1}$.
\end{lemma}

Hence, taking the same process as in Theorem \ref{Bernstein}, we can get the following Bernstein  inequality in the Clifford algebra setting  by the maximum modulus principle for  slice monogenic functions \cite[Theorem 3.1]{RX}. See \cite{CSS} for the precise  definition  of  slice monogenic functions.
\begin{theorem}
Let  $P(x)=\sum^{n}_{j=0} x^{j}a_{j}: \mathbb{R}^{m+1}\rightarrow \mathbb{R}_{0,m}$  be  a (left) slice monogenic polynomial of degree $n$ with  Clifford algebraic coefficients $a_{n}\in \mathbb{R}_{0,m}$. Then
 \begin{eqnarray*}
\|P'\|\leq n\|P\|,
 \end{eqnarray*}
where the norm of $P$ is defined by $\|P\|= \max_{|x|\leq1}|P(x)|$.

Moreover,   equality  holds if and only if $P(x)=x^{n}a_{n}$ for some $a_{n}\in \mathbb{R}_{0,m}$.
\end{theorem}
\end{remark}

\begin{remark}\label{O-C-1}
In fact, the non-associative nature of octonions plays no role in  (\ref{associa}) and (\ref{deriative}) in the proof of Theorem \ref{Bernstein}  since Artin's theorem (cf. \cite{RDS})  implies that the subalgebra generated by two elements in octonions is associative. Hence
Theorem \ref{Bernstein} still holds for octonionic slice regular polynomials
in the sense of  Gentili and  Struppa in \cite{GS3}.

\end{remark}

Now let us prove Proposition \ref{Erdos}.

\begin{proof}[Proof of Proposition \ref{Erdos}]
Let $P=\sum_{j=0}^{n}q^{j}a_{j}$ be the polynomial as described in the proposition.
Notice that $P_{I}(\mathbb{C}_{I})\subset\mathbb{C}_{I}$ for some $I\in \mathbb{S}$,
equivalently,  $a_{j}\in \mathbb{C}_{I}$ for   $j=0,1,\ldots,n$, which implies that $P'_{I}(\mathbb{C}_{I})\subset \mathbb{C}_{I}$. The classical  Erd\"{o}s-Lax   inequality  applied to $P_{I}$ yields that
 \begin{eqnarray}\label{E-Lax}
\|P_{I}'\|\leq \frac{n}{2}\|P_{I}\|,
 \end{eqnarray}
where $ \|P_{I}\|=\max_{q\in \partial\mathbb{B}_I}|P_{I}(q)| $.

Using  the convex combination identity for
slice regular functions  \cite{Ren}, it holds that
 \begin{eqnarray}\label{convex}
 |P'(\alpha+\beta J)|^{2}=\frac{1+\langle J,I \rangle}{2} |P'(\alpha+\beta I)|^{2}+\frac{1-\langle J,I \rangle}{2}|P'(\alpha-\beta I)|^{2},  \end{eqnarray}
where $\langle \cdot , \cdot \rangle$ is Euclidean inner product in $\mathbb{R}^{3}$.

From (\ref{E-Lax}) and  (\ref{convex}), we have
$$|P'(\alpha+\beta J)|\leq \frac{n}{2}\|P_{I}\|\leq\frac{n}{2}\|P\|_{\infty}, \quad \forall \alpha+\beta J\in\mathbb{ B}_{J}, \quad \forall J\in \mathbb{S},$$
as desired.
\end{proof}

\section{Proof  of Theorems  \ref{main-result-equavalent-11} and  \ref{Ankeny-Converse}}
Now we recall some necessary definitions and properties from \cite{CSSN,Lam,GSS} in order to prove  Theorem \ref{Ankeny-Converse}.
\begin{definition}Let $f$, $g:\mathbb{B} \rightarrow \mathbb H$ be two slice regular functions  of the form $$f(q)=\sum\limits_{n=0}^{\infty}q^na_n,\qquad g(q)=\sum\limits_{n=0}^{\infty}q^nb_n.$$ The regular product (or $\ast$-product) of $f$ and $g$ is the slice regular function defined by$$f\ast g(q)=\sum\limits_{n=0}^{\infty}q^n\bigg(\sum\limits_{k=0}^n a_kb_{n-k}\bigg).$$\end{definition}

Notice that the $\ast$-product is associative and is not, in general, commutative. Its connection with the usual pointwise product is clarified by the following result.

\begin{proposition} \label{product}
Let $f$ and $g$ be slice  regular on $\mathbb{B} $. Then for all $q\in \mathbb{B} $,
$$f\ast g(q)=\left\{\begin{array}{lll}f(q)g(f(q)^{-1}qf(q)) \qquad &\mathrm {if}  \  f(q)\neq 0;
\\      0\qquad  &\mathrm {if}  \ f(q)=0.\end{array}\right.$$
\end{proposition}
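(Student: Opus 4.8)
The plan is to reduce the Cauchy-product series defining $f\ast g$ to an ordinary double series and then factor. Fix $q\in\mathbb{B}$; since $f$ and $g$ are slice regular on the unit ball, both power series converge absolutely for $|q|<1$, so I may rearrange terms freely. Reindexing the Cauchy product by setting $m=n-k$, I would first record the intermediate identity
$$f\ast g(q)=\sum_{n=0}^\infty q^n\sum_{k=0}^n a_k b_{n-k}=\sum_{k=0}^\infty\sum_{m=0}^\infty q^{k+m}a_k b_m.$$

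Next I would group the double sum by the index $m$ and use that powers of a single quaternion commute, $q^{k+m}=q^m q^k$, to pull $q^m$ out on the left while keeping $b_m$ on the right:
$$f\ast g(q)=\sum_{m=0}^\infty q^m\Big(\sum_{k=0}^\infty q^k a_k\Big)b_m=\sum_{m=0}^\infty q^m\,f(q)\,b_m.$$
This single identity is the heart of the argument and disposes of both cases at once. If $f(q)=0$, every term on the right vanishes and $f\ast g(q)=0$, which is the second branch of the stated formula.

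For the first branch, assume $f(q)\neq0$ and set $w:=f(q)^{-1}q\,f(q)$. The key algebraic observation is the telescoping identity $q^m f(q)=f(q)\,w^m$, equivalently $f(q)^{-1}q^m f(q)=\big(f(q)^{-1}q\,f(q)\big)^m$, obtained by inserting $f(q)f(q)^{-1}=1$ between consecutive factors of $q^m$. Substituting this into the intermediate identity yields
$$f\ast g(q)=\sum_{m=0}^\infty f(q)\,w^m b_m=f(q)\sum_{m=0}^\infty w^m b_m=f(q)\,g(w),$$
which is exactly $f(q)\,g\big(f(q)^{-1}q\,f(q)\big)$, as claimed. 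Here I would also note that $w$ is similar (conjugate) to $q$, so by multiplicativity of the quaternionic modulus $|w|=|q|<1$ and hence $w\in\mathbb{B}$, ensuring that $g(w)$ lies in the domain of convergence of $g$.

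The one point requiring genuine care — the main, though mild, obstacle — is the rearrangement of the double series and the interchange of summation with the factoring of $q^m$. I would justify these by absolute convergence on the closed ball of radius $|q|<1$, using that $\sum_{k,m}|q|^{k+m}|a_k||b_m|=\big(\sum_k|q|^k|a_k|\big)\big(\sum_m|q|^m|b_m|\big)<\infty$. Everything else is formal manipulation, and the noncommutativity of $\mathbb{H}$ is accommodated precisely by keeping the coefficients $a_k,b_m$ on the right throughout and by encoding the obstruction to moving $f(q)$ past $q^m$ in the conjugation $w=f(q)^{-1}q\,f(q)$.
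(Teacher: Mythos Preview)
Your argument is correct: the rearrangement of the Cauchy product into the double sum $\sum_m q^m f(q)\,b_m$, followed by the conjugation identity $q^m f(q)=f(q)\,w^m$ with $w=f(q)^{-1}q\,f(q)$, is exactly the standard route to this formula, and your justification of absolute convergence and of $|w|=|q|<1$ is sound.

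Note, however, that the paper does not supply its own proof of this proposition at all; it is quoted without proof from the references \cite{CSSN,Lam,GSS} as background for the proof of Theorem~\ref{Ankeny-Converse}. Your write-up is essentially the proof one finds in those sources (see, e.g., \cite[Chapter~1]{GSS}), so there is nothing to contrast here beyond observing that you have reconstructed the standard argument.
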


We remark that if $q=x+yI$ and $f(q)\neq0$, then $f(q)^{-1}qf(q)$ has the same modulus and same real part as $q$. Therefore $f(q)^{-1}qf(q)$ lies in the same 2-sphere $x+y\mathbb S$ as $q$. Notice that a zero $x_0+y_0I$ of the function $g$ is not necessarily a zero of $f\ast g$, but some element on the same sphere $x_0+y_0\mathbb S$ does. In particular, a real zero of $g$ is still a zero of $f\ast g$. To present a characterization of the structure of the zero set of a regular function $f$, we  introduce  the \emph{regular conjugate} of $f$ $$f^c(q)=\sum\limits_{n=0}^{\infty}q^n\bar{a}_n,$$ and the \emph{symmetrization} of $f$   $$f^s(q)=f\ast f^c(q)=f^c\ast f(q)=\sum\limits_{n=0}^{\infty}q^n\bigg(\sum\limits_{k=0}^n a_k\bar{a}_{n-k}\bigg).$$

\begin{proposition}\label{zero}  Let $f$ be a slice regular function on   $B(0, R)=\{q\in \mathbb{H} : |q|<R \}$ and choose $S=x+y\mathbb{S} \subset B(0, R)$. The zeros of $f$ in $S$ are in one-to-one correspondence with those of $f^{c}$. Furthermore, $f^{s}$ vanishes identically on $S$ if and only if $f^{s}$ has a zero in $S$, if and only if $f$ has a zero in $S$ (if and only if $f^{c}$ has a zero in $S$).
\end{proposition}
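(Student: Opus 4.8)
The plan is to reduce everything to the elementary observation that, on a fixed sphere $S = x + y\mathbb{S} \subset \Omega$, each of $f$, $f^c$, $f^s$ restricts to an \emph{affine} function of the imaginary unit $I$. First I would use that the real subalgebra $\mathbb{R}\oplus I\mathbb{R}$ is a copy of $\mathbb{C}$, so that $(x+yI)^n = u_n + Iv_n$ with $u_n, v_n \in \mathbb{R}$ depending only on $x, y, n$. Expanding $f(q) = \sum_n q^n a_n$ (valid on $\mathbb{B}$, and in the general setting via the representation formula) then gives $f(x + yI) = \alpha + I\beta$ with $\alpha = \sum_n u_n a_n$ and $\beta = \sum_n v_n a_n$ in $\mathbb{H}$, independent of $I$. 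Since $u_n, v_n$ are real, the identical computation applied to $f^c(q) = \sum_n q^n \bar a_n$ yields $f^c(x + yI) = \bar\alpha + I\bar\beta$.

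Next I would treat the correspondence between the zeros of $f$ and $f^c$ on $S$ by a short case analysis on $\beta$. If $\beta = 0$ then $f|_S \equiv \alpha$ is constant, so $f$ vanishes either on all of $S$ (when $\alpha = 0$) or nowhere; the same dichotomy holds verbatim for $f^c|_S \equiv \bar\alpha$, and the two cases match. If $\beta \neq 0$ then $I \mapsto \alpha + I\beta$ is injective, so $f$ has at most one zero on $S$, the unique $I = -\alpha\beta^{-1}$, which is an actual zero precisely when $-\alpha\beta^{-1} \in \mathbb{S}$. The key point, and the step I expect to be the main obstacle because of noncommutativity, is to verify that $f^c$ has a zero on $S$ under \emph{exactly} the same condition. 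Here I would use the identity $J := -\bar\alpha\bar\beta^{-1} = \bar\beta\,\overline{(-\alpha\beta^{-1})}\,\bar\beta^{-1}$, which exhibits the candidate zero $J$ of $f^c$ as a similarity transform of $\overline{(-\alpha\beta^{-1})}$ by $\bar\beta$. Since both quaternionic conjugation and similarity transforms preserve membership in $\mathbb{S}$, we obtain $-\alpha\beta^{-1} \in \mathbb{S}$ if and only if $J \in \mathbb{S}$; hence $f$ has an isolated zero $x+yI$ on $S$ if and only if $f^c$ has the isolated zero $x + yJ$, and $x+yI \mapsto x+yJ$ is the sought bijection. Collecting the three cases (no zero, one isolated zero, the whole sphere) gives the one-to-one correspondence.

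For the symmetrization statements I would first record that the coefficients $\sum_{k=0}^n a_k \bar a_{n-k}$ of $f^s$ are real: pairing the index $k$ with $n-k$ turns each pair into $2\,\mathrm{Re}(a_k \bar a_{n-k})$, and the middle term (when $n$ is even) is $|a_{n/2}|^2$. Consequently the affine expansion above gives $f^s(x + yI) = A + IB$ with $A, B \in \mathbb{R}$ independent of $I$, so that $|f^s(x + yI)|^2 = A^2 + B^2$ is constant on $S$. This immediately yields that $f^s$ has a zero in $S$ if and only if $f^s$ vanishes identically on $S$, both amounting to $A = B = 0$.

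It remains to tie $f^s$ to $f$ and $f^c$ through the pointwise product formula of Proposition \ref{product}. If $f(q_0) = 0$ for some $q_0 \in S$, then $f^s(q_0) = f \ast f^c(q_0) = 0$, so $f^s$ has a zero on $S$ and hence vanishes on all of $S$. Conversely, suppose $f^s \equiv 0$ on $S$ while $f$ has no zero on $S$; then for every $q \in S$ Proposition \ref{product} gives $0 = f^s(q) = f(q)\, f^c\big(f(q)^{-1} q f(q)\big)$ with $f(q) \neq 0$, forcing $f^c$ to vanish at $f(q)^{-1} q f(q)$, which lies on $S$ by the remark following Proposition \ref{product}. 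Thus $f^c$ has a zero on $S$, and by the correspondence already established so does $f$, a contradiction. Chaining these implications closes the stated circle of equivalences and completes the proof.
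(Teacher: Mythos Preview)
The paper does not supply its own proof of this proposition; it is quoted verbatim as background from \cite{CSSN,Lam,GSS} and used as a black box in the proof of Theorem~\ref{Ankeny-Converse}. So there is nothing to compare against, and the relevant question is simply whether your argument is sound. It is.

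Your reduction to the affine form $f|_{S}=\alpha+I\beta$, $f^{c}|_{S}=\bar\alpha+I\bar\beta$ is the standard mechanism here, and the similarity identity
\[
-\bar\alpha\,\bar\beta^{-1}=\bar\beta\,\overline{(-\alpha\beta^{-1})}\,\bar\beta^{-1}
\]
is exactly the right way to pair the isolated zero of $f$ with that of $f^{c}$, since both quaternionic conjugation and inner automorphisms preserve $\mathbb{S}$. The reality of the coefficients of $f^{s}$, giving $f^{s}|_{S}=A+IB$ with $A,B\in\mathbb{R}$ and hence constant modulus on $S$, is likewise correct, and your use of Proposition~\ref{product} to close the chain of equivalences is clean.

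One small point worth tightening: you justify $f^{c}|_{S}=\bar\alpha+I\bar\beta$ and the reality of the $f^{s}$ coefficients via the power-series expansion, which strictly speaking lives on $\mathbb{B}$, and only gesture at the general symmetric slice domain through the representation formula. In the general setting one usually takes $f^{c}(x+yI)=\bar\alpha+I\bar\beta$ as the definition (or proves it from the representation formula), and deduces that $f^{s}$ is slice-preserving from $(f^{s})^{c}=f^{s}$ rather than from an explicit coefficient pairing. With that cosmetic adjustment your proof works on any symmetric slice domain $\Omega$, matching the generality of the stated proposition.
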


\begin{remark}
 It is easy to see that $ f^c(\cdot)=\overline{f(\overline{\cdot})} $   for any complex polynomial  $f$. However, this result  does not hold  for slice regular polynomials.
\end{remark}

Hence, we  also need the following result to prove  Theorems  \ref{main-result-equavalent-11}  and \ref{Ankeny-Converse}.
\begin{proposition}\label{Bloch}{\rm{(\cite[Proposition 3.2]{Rocchetta})}}
Let f be a slice regular function on  $B(0, R)$. For any sphere of the form $x+y\mathbb{S}$ contained in $B(0, R)$, the following equality holds
$$\sup_{I\in \mathbb{S}}|f(x + yI)|= \sup_{I\in \mathbb{S}}|f^{c}(x + yI)|.$$
\end{proposition}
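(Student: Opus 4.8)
The plan is to reduce the whole statement to a pointwise computation on a single fixed sphere $S=x+y\mathbb{S}$. For $q=x+yI$ lying in the slice $\mathbb{C}_I$, the power $(x+yI)^n$ equals $A_n+IB_n$ with $A_n={\rm Re}((x+iy)^n)$ and $B_n={\rm Im}((x+iy)^n)$ real and independent of $I$. Substituting into the series I would write
$$f(x+yI)=\sum_{n=0}^{\infty}(A_n+IB_n)a_n=\alpha+I\beta,\qquad \alpha=\sum_{n=0}^{\infty}A_na_n,\ \ \beta=\sum_{n=0}^{\infty}B_na_n.$$
Since $|A_n|,|B_n|\leq|q|^n$ and $\sum|q|^n|a_n|<\infty$ for $|q|<R$, both series converge absolutely and $\alpha,\beta$ are independent of $I$. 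Because the numbers $A_n,B_n$ depend only on $q^n$ (not on the coefficients) and are real, the same expansion applied to $f^c$, whose coefficients are $\bar a_n$, gives $f^c(x+yI)=\bar\alpha+I\bar\beta$. Thus the claim becomes the comparison of $\sup_{I\in\mathbb{S}}|\alpha+I\beta|$ with $\sup_{I\in\mathbb{S}}|\bar\alpha+I\bar\beta|$.

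Next I would compute the squared modulus as a function of $I$. Using $\bar I=-I$ for $I\in\mathbb{S}$, the cyclicity of the real part, and the elementary identity ${\rm Re}(Iw)=-\langle I,{\rm Im}(w)\rangle$ for imaginary $I$, one gets
$$|\alpha+I\beta|^2=|\alpha|^2+|\beta|^2-2{\rm Re}(\alpha\bar\beta I)=|\alpha|^2+|\beta|^2+2\langle I,{\rm Im}(\alpha\bar\beta)\rangle.$$
As $I$ ranges over the unit sphere $\mathbb{S}$ of imaginary quaternions, the linear term is maximized when $I$ points along ${\rm Im}(\alpha\bar\beta)$, so $\sup_{I\in\mathbb{S}}|f(x+yI)|^2=|\alpha|^2+|\beta|^2+2|{\rm Im}(\alpha\bar\beta)|$. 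The identical computation for $f^c$ yields $\sup_{I\in\mathbb{S}}|f^c(x+yI)|^2=|\alpha|^2+|\beta|^2+2|{\rm Im}(\bar\alpha\beta)|$.

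The entire proposition therefore reduces to the single algebraic identity $|{\rm Im}(\alpha\bar\beta)|=|{\rm Im}(\bar\alpha\beta)|$, which I regard as the crux of the argument. I would deduce it from two elementary facts about quaternions. First, ${\rm Re}(uv)={\rm Re}(vu)$ and $|uv|=|vu|$ for all $u,v\in\mathbb{H}$, whence $|{\rm Im}(uv)|^2=|uv|^2-({\rm Re}(uv))^2=|vu|^2-({\rm Re}(vu))^2=|{\rm Im}(vu)|^2$. Second, conjugation negates the imaginary part and hence preserves its modulus. Taking $u=\bar\alpha$ and $v=\beta$ gives $|{\rm Im}(\bar\alpha\beta)|=|{\rm Im}(\beta\bar\alpha)|=|{\rm Im}(\overline{\beta\bar\alpha})|=|{\rm Im}(\alpha\bar\beta)|$, using $\overline{\beta\bar\alpha}=\alpha\bar\beta$ in the last step. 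Combining the three displays then forces the two suprema of the squared moduli to agree, which gives $\sup_{I\in\mathbb{S}}|f(x+yI)|=\sup_{I\in\mathbb{S}}|f^c(x+yI)|$, as desired.

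The modulus expansion and the passage to the supremum over $\mathbb{S}$ are routine; the only genuine obstacle is recognizing that the two cross terms $|{\rm Im}(\alpha\bar\beta)|$ and $|{\rm Im}(\bar\alpha\beta)|$ coincide despite the noncommutativity. A minor technical point I would verify carefully is the separate convergence of the series defining $\alpha$ and $\beta$ and their independence of $I$, both of which follow from the absolute convergence of $f$ on $B(0,R)$.
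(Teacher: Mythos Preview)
The paper does not give its own proof of this proposition; it is simply quoted from \cite[Proposition 3.2]{Rocchetta} as a known ingredient for the proof of Theorem~\ref{Ankeny-Converse}. So there is no argument in the paper to compare yours against.

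That said, your proof is correct and self-contained. The representation $f(x+yI)=\alpha+I\beta$ with $\alpha,\beta\in\mathbb{H}$ independent of $I$ is exactly the splitting that underlies the standard representation formula for slice regular functions on a ball, and your derivation of it from the power series (together with the convergence check) is legitimate. The modulus computation
\[
|\alpha+I\beta|^{2}=|\alpha|^{2}+|\beta|^{2}+2\langle I,\ {\rm Im}(\alpha\bar\beta)\rangle
\]
is right, and the reduction to $|{\rm Im}(\alpha\bar\beta)|=|{\rm Im}(\bar\alpha\beta)|$ is the genuine content; your verification via ${\rm Re}(uv)={\rm Re}(vu)$, $|uv|=|vu|$, and $\overline{\beta\bar\alpha}=\alpha\bar\beta$ is clean. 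One could shorten this last step slightly by observing directly that ${\rm Im}(\bar\alpha\beta)=-{\rm Im}(\overline{\bar\alpha\beta})=-{\rm Im}(\bar\beta\alpha)$ and then invoking $|{\rm Im}(uv)|=|{\rm Im}(vu)|$ once, but your route is perfectly fine.
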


Let us  prove  Theorem \ref{main-result-equavalent-11}.
\begin{proof}[Proof of theorem \ref{main-result-equavalent-11}]
Assume   $P(q)=\sum^{n}_{j=0} q^{j}a_{j}$ and  denote  $Q(q)=\sum^{n}_{j=0} q^{j}\overline{a_{n-j}}= q^{n} P^{c}(\frac{1}{q})$.
For $r>0$, we have, by Proposition \ref{Bloch},
$$\max_{|q|= r}|Q(q)| = r^{n}  \max_{|q|= r}\big |  P^{c} ( \frac{1}{q} )\big| = r^{n}  \max_{|q|= r} \big|P(\frac{1}{q})\big|=
r^{n}  \max_{|q|= 1/r} |P(q)|. $$
Hence, by the maximum modulus principle for slice regular functions,
$$ \max_{|q|= R}|P(q)| =   R^{n}    \max_{|q|= 1/R} |Q(q)|\geq R^{n}    \max_{|q|= 1 } |Q(q)|= R^{n}    \max_{|q|= 1 } |P(q)| ,  \quad 0<R<1, $$
as desired.
\end{proof}

To establish the complex version of Theorem \ref{Ankeny-Converse}, Ankeny and Rivlin used  the following lemma in terms of  the Laguerre  Theorem. Here we give an  elementary proof for the sake of completeness.

\begin{lemma}\label{Ankeny} If
$$P(z)=(z-z_1)\cdots (z-z_n)$$
where $ z_m \in \mathbb{D}=\{z\in \mathbb{C} : |z|<1\}$ for $m=1,2, \ldots, n$, then
$$\Big|\frac{P'(a)}{P(a)}\Big|>\frac{n}{2}, \quad \forall \ a\in  \partial\mathbb{D}.$$
\end{lemma}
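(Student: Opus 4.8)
The plan is to estimate $\mathrm{Re}\,\bigl(p'(a)/p(a)\bigr)$ rather than the modulus directly, exploiting the fact that $p'(a)/p(a)=\sum_{m=1}^n \frac{1}{a-z_m}$ by logarithmic differentiation. Since $|p'(a)/p(a)|\ge \mathrm{Re}\,\bigl(p'(a)/p(a)\bigr)$, it suffices to show that for every $a$ on the unit circle and every $z_m$ with $|z_m|<1$ one has $\mathrm{Re}\,\frac{1}{a-z_m}>\frac12$, and then sum over $m$. Writing $w=z_m/a$ so that $|w|<1$, we have $\frac{1}{a-z_m}=\frac{1}{a}\cdot\frac{1}{1-w}$, and since multiplication by $a^{-1}=\bar a$ is a rotation it does not obviously preserve the real part; so instead I would argue directly with $a$ on the circle.

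The key computation is the following: for $|a|=1$ and $|z|<1$,
\begin{equation*}
\mathrm{Re}\,\frac{1}{a-z}=\mathrm{Re}\,\frac{\bar a-\bar z}{|a-z|^2}=\frac{\mathrm{Re}(\bar a-\bar z)}{|a-z|^2}.
\end{equation*}
This is not immediately $>1/2$ in this crude form, so the cleaner route is to use the identity, valid for $|a|=1$,
\begin{equation*}
\mathrm{Re}\,\frac{1}{a-z}=\frac12\cdot\frac{|a|^2-|z|^2+|a-z|^2}{|a-z|^2}\cdot\frac{1}{|a|^2}
=\frac12+\frac{1-|z|^2}{2|a-z|^2},
\end{equation*}
where I have used $|a-z|^2=|a|^2-2\mathrm{Re}(a\bar z)+|z|^2=1-2\mathrm{Re}(a\bar z)+|z|^2$ together with $2\mathrm{Re}\frac{1}{a-z}=\frac{2\mathrm{Re}(\bar a-\bar z)}{|a-z|^2}$ and $\mathrm{Re}(\bar a a)=1$, $\mathrm{Re}(\bar a z)=\mathrm{Re}(a\bar z)$. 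Since $|z_m|<1$ strictly, the correction term $\frac{1-|z_m|^2}{2|a-z_m|^2}$ is strictly positive, hence $\mathrm{Re}\,\frac{1}{a-z_m}>\frac12$ for each $m$.

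Summing over $m=1,\dots,n$ then gives $\mathrm{Re}\,\frac{p'(a)}{p(a)}=\sum_{m=1}^n\mathrm{Re}\,\frac{1}{a-z_m}>\frac{n}{2}$, and therefore $\bigl|\frac{p'(a)}{p(a)}\bigr|\ge\mathrm{Re}\,\frac{p'(a)}{p(a)}>\frac{n}{2}$ for all $a\in\partial\mathbb{D}$, which is the assertion. The only point needing care — and the main obstacle — is the algebraic identity $\mathrm{Re}\,\frac{1}{a-z}=\frac12+\frac{1-|z|^2}{2|a-z|^2}$ for $|a|=1$; everything else is a one-line summation. I would verify this identity by clearing denominators: it is equivalent to $2\mathrm{Re}(\bar a-\bar z)=|a-z|^2+(1-|z|^2)$, and expanding the right side gives $1-2\mathrm{Re}(a\bar z)+|z|^2+1-|z|^2=2-2\mathrm{Re}(a\bar z)=2\mathrm{Re}(\bar a a)-2\mathrm{Re}(\bar a z)=2\mathrm{Re}(\bar a a-\bar a z)=2\mathrm{Re}\bigl(\bar a(a-z)\bigr)$; since $\bar a(a-z)=1-\bar a z$ has real part $1-\mathrm{Re}(\bar a z)=\mathrm{Re}(\bar a-\bar z)$ (using $|a|=1$), the identity follows. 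This elementary argument replaces the appeal to Laguerre's theorem used in \cite{Ankeny}.
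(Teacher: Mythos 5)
Your overall strategy --- bounding the modulus from below by a real part and showing that each summand of the logarithmic derivative $\frac{p'(a)}{p(a)}=\sum_{m=1}^{n}\frac{1}{a-z_m}$ contributes more than $\frac12$ --- is exactly the idea of the paper's proof, but the inequality you reduce to is false, and the algebraic ``verification'' of your key identity is where the error hides. You claim $\mathrm{Re}\,\frac{1}{a-z}>\frac12$ for all $|a|=1$, $|z|<1$; taking $a=-1$, $z=0$ gives $\mathrm{Re}\,\frac{1}{a-z}=-1$. Consequently the identity $\mathrm{Re}\,\frac{1}{a-z}=\frac12+\frac{1-|z|^2}{2|a-z|^2}$ cannot hold, and the slip in your derivation is the final step, where you equate $\mathrm{Re}\bigl(\bar a(a-z)\bigr)=1-\mathrm{Re}(\bar a z)$ with $\mathrm{Re}(\bar a-\bar z)=\mathrm{Re}(a)-\mathrm{Re}(z)$: these differ in general (for $a=i$, $z=0$ they are $1$ and $0$). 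You in fact flagged the obstruction yourself at the outset --- multiplication by $\bar a$ is a rotation and need not preserve real parts --- but then proceeded as though it did.

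The repair is precisely what the paper does: since $|a|=1$, multiply the logarithmic derivative by $a$ before taking real parts. One has
\begin{equation*}
\Bigl|\frac{p'(a)}{p(a)}\Bigr|=\Bigl|a\,\frac{p'(a)}{p(a)}\Bigr|\ \ge\ \mathrm{Re}\Bigl(a\,\frac{p'(a)}{p(a)}\Bigr)=\sum_{m=1}^{n}\mathrm{Re}\,\frac{a}{a-z_m},
\end{equation*}
and the identity you were aiming at is correct once the extra factor of $a$ is present:
\begin{equation*}
\mathrm{Re}\,\frac{a}{a-z}=\frac{\mathrm{Re}\bigl(a(\bar a-\bar z)\bigr)}{|a-z|^2}=\frac{1-\mathrm{Re}(a\bar z)}{|a-z|^2}=\frac12+\frac{1-|z|^2}{2|a-z|^2}>\frac12
\end{equation*}
for $|z|<1$, because $2\bigl(1-\mathrm{Re}(a\bar z)\bigr)=|a-z|^2+\bigl(1-|z|^2\bigr)$. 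Equivalently, $\frac{a}{a-z}=\frac{1}{1-\bar a z}$ and the map $w\mapsto(1-w)^{-1}$ carries the open unit disk onto the half-plane $\{\mathrm{Re}>\frac12\}$. Summing over $m$ then yields the lemma; with this one-line correction your argument coincides with the paper's proof.
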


\begin{proof}
Let $a=e^{i\theta}$ for some $ \theta\in \mathbb{R}$. Then,
$$ \textrm{Re}\frac{e^{i\theta}}{e^{i\theta}-z_{m}}>\frac{1}{2}$$
 for $z_m \in \mathbb{D}$, $m=1,2, \ldots, n$. Hence,
$$\Big|\frac{P'(a)}{P(a)}\Big|\geq \textrm{Re} \big( e^{i\theta}\frac{P'(e^{i\theta})}{P(e^{i\theta})}\big)
=\sum_{m=1}^{n}\textrm{Re}\frac{e^{i\theta}}{e^{i\theta}-z_{m}}>\frac{n}{2}.$$
\end{proof}

Now we can give the proof  of Theorem \ref{Ankeny-Converse}.
\begin{proof}[Proof of theorem \ref{Ankeny-Converse}]
Let us consider  the  symmetrization $P^{s}$ of the quaternionic polynomial $P$.  From Proposition     \ref{product}, it holds that
\begin{eqnarray}\label{1}
  P^{s}(q)=\left\{\begin{array}{lll}P(q)P^{c}(P(q)^{-1}qP(q)) \qquad &\mathrm {if}  \  P(q)\neq 0;
\\      0\qquad  &\mathrm {if}  \ P(q)=0.\end{array}\right.
 \end{eqnarray}
Hence
 \begin{eqnarray*}
  \max_{|q|=R} |P^{s}(q)|
  &=&\max_{ \{q\in \mathbb{H} \ :\ |q|=R, P(q)\neq 0\}   } |P^{s}(q)|
  \\
 &=& \max_{ \{q\in \mathbb{H} \ :\ |q|=R, P(q)\neq 0\}   } |P(q) P^{c}( P(q)^{-1}qP(q))|
 \\
 &\leq & \max_{|q|=R} |P(q)| \max_{ \{q\in \mathbb{H} \ :\ |q|=R, P(q)\neq 0\}   } |P^{c}(P(q)^{-1}qP(q) )|
  \\
   &\leq & \max_{|q|=R} |P(q)| \max_{|q|=R} |P^{c}(q)|,
 \end{eqnarray*}
Combining this with Proposition \ref{Bloch}, we have
 $$\max_{|q|=R} |P^{s}(q)|
  \leq   \max_{|q|=R} |P(q)| \max_{|q|=R} |P^{c}(q)|
  = \max_{|q|=R} |P(q)|^{2},$$
which implies that, by assumption,
\begin{eqnarray}\label{2}
 \|P^{s}\|_{\infty}\leq \|P\|_{\infty}^{2}=1,
 \end{eqnarray}
and
 \begin{eqnarray}\label{max} \max_{|q|=R} |P^{s}(q)|\leq \Big(\frac{1+R^{n}}{2}\Big)^{2}\leq \frac{1+R^{2n}}{2}, \quad 1<R<\delta+1. \end{eqnarray}

The condition $P(1)=1$ implies obviously that   $P^{c}(1)=1$, and then,   by
(\ref{1}),
  \begin{eqnarray}\label{3}
 P^{s}(1)=1.
 \end{eqnarray}
From  (\ref{2}) and  (\ref{3}), we see that  $ \|P^{s}\|_{\infty} =P^{s}(1)=1$.
 If $P^{s}$ is constant, then the claim is trivial. Otherwise,
  the  Hopf  lemma \cite[Lemma 15.3.7]{Rudin}   shows that $(P^{s})^{'} (1)>0$.
Then, given any $\epsilon>0$, sufficiently small, we have
$$|P^{s}(1+\epsilon)-P^{s}(1)|=  P^{s}(1+\epsilon)-1 \leq \frac{1+(1+\epsilon)^{2n}}{2}-1.$$
Hence     \begin{eqnarray}\label{A}
 0<(P^{s})^{'} (1)\leq n.
 \end{eqnarray}

Suppose that $P$ has all its zeros in $\mathbb{B}$. From Proposition  \ref{zero}, we see that $P^{s}$ has all its zeros in $\mathbb{B}$. Noticing  that $P^{s}$ is   a quaternionic  polynomial with real coefficients, we have,  by (\ref{3}) and Lemma \ref{Ankeny},
$$|(P^{s})^{'} (1)| > n,$$
which contradicts inequality (\ref{A}).
Now the proof is complete.
\end{proof}

An inverse inequality of (\ref{Lax})   was proved by Turan \cite{Turan}, which says that
$$ \| P'\|_{\infty}  \geq \frac{n}{2}  \|P\|_{\infty}$$
for  the  complex polynomial   $P\in \mathbb{P}_{n}(\mathbb{C})$ with all zeros in $\overline{\mathbb{D}}$.

From the proof of  Theorem \ref{Ankeny-Converse}, we obtain the following Turan inequality.
\begin{proposition}\label{Turan}
If   $P\in \mathbb{P}_{n}(\mathbb{H})$   has all its zeros in $\overline{\mathbb{B}}$ and   $P(1)=\|P\|_{\infty}$, then
 $$\|P' \|_{\infty} \geq  \frac{n}{2}\|P\|_{\infty}.$$
\end{proposition}

 \begin{remark}
For complex polynomials,  the condition $P(1)=\|P\|_{\infty}$ in Proposition \ref{Turan}  is not essential.  Indeed,   we can consider the polynomial $Q(z)=P(z_{0}z)\overline{P(z_{0})} $ when $|P(z_{0})|=\|P\|_{\infty}$ for some  $z_{0}\in \partial \mathbb{D}$.   Furthermore, there are many  quaternionic polynomials satisfying  the  statement of Proposition \ref{Turan} such as $P(q)=(q^{2}+2rq+1)(q^{n-2}u+q^{n-3}u)$ for any $r\in [0,1]$, integer $n\geq 2$ and quaternion $u$.  However, the construction of a concrete example of quaternionic polynomial of degree $d\geq 3$ with not all coefficients in the same plane  and  satisfying the hypothesis  of Proposition \ref{Turan}  remains as an open question.

\end{remark}

\begin{proposition}
If  $P\in \mathbb{P}_{2}(\mathbb{H})$   has all its zeros in $\overline{\mathbb{B}}$, then
 $$\|P' \|_{\infty} \geq   \|P\|_{\infty}.$$
\end{proposition}
\begin{proof}
By assumption,  the quaternionic polynomial $P\in \mathbb{P}_{2}(\mathbb{H})$ can be described as
$$P(q)=(q-\alpha)*(q-\beta)= q^{2}-q(\alpha+\beta)+ \alpha \beta,$$
 for some  $\alpha, \beta \in \mathbb{H} $ with  $ |\alpha|, |\beta| \leq 1$. Then we obtain readily  $\|P\|_{\infty} \leq  2+|\alpha+\beta| $ and  $P'(q)=2q-(\alpha+\beta).$ Let us now show that
 $$\|P' \|_{\infty} =\max_{|q|=1}|P'(q)|=2+|\alpha+\beta|.$$
If $\alpha+\beta=0$, this result is trivial.  Otherwise, notice  that $\|P'\|\leq 2+|\alpha+\beta|$ and choose  $q=- \frac{ \alpha+\beta }{|\alpha+\beta|}$, as  desired.
\end{proof}

\textbf{Acknowledgemants}
 The main result of this work is part of the author's Ph.D. thesis.
 The author would like express his hearty thanks  to his advisor, Professor Guangbin Ren, for helpful discussions. In addition, the author is very grateful to Professor Irene Sabadini  for valuable communications and  the anonymous referees for  constructive suggestions  which improve   significantly the presentation of the paper.

\bibliographystyle{amsplain}

\vskip 10mm

\end{document}